\documentclass[]{article}

\usepackage{amsfonts,amssymb,amsbsy,latexsym,amsmath,tabulary,
graphicx,times,caption,fancyhdr,yfonts}
\usepackage[utf8]{inputenc}
\usepackage{url,multirow,morefloats,floatflt,cancel,textcomp,tfrupee}
\usepackage{pifont}
\usepackage[nointegrals]{wasysym}
\urlstyle{rm}

\usepackage{pifont}

\newtheorem{theorem}{Theorem}[section]
\newtheorem{definition}[theorem]{Definition}

\newtheorem{proposition}[theorem]{Proposition}

\newtheorem{remark}[theorem]{Remark}

\newenvironment{proof}{\noindent\mbox{\bf Proof.}}
{\hfill\mbox{\ding{113}}\bigskip}

\makeatletter

\AtBeginDocument{
\expandafter\ifx\csname eqalign\endcsname\relax
\def\eqalign#1{\null\vcenter{\def\\{\cr}\openup\jot\m@th
  \ialign{\strut$\displaystyle{##}$\hfil&$\displaystyle{{}##}$\hfil
      \crcr#1\crcr}}\,}
\fi
}

\let\lt=<
\let\gt=>
\def\processVert{\ifmmode|\else\textbar\fi}

\@ifundefined{subparagraph}{
\def\subparagraph{\@startsection{paragraph}{5}{2\parindent}{0ex plus 0.1ex minus 0.1ex}%
{0ex}{\normalfont\small\itshape}}%
}{}

\newcommand\role[1]{\unskip}
\newcommand\aucollab[1]{\unskip}

\@ifundefined{tsGraphicsScaleX}{\gdef\tsGraphicsScaleX{1}}{}
\@ifundefined{tsGraphicsScaleY}{\gdef\tsGraphicsScaleY{.9}}{}
\def\checkGraphicsWidth{\ifdim\Gin@nat@width>\textwidth
	\tsGraphicsScaleX\textwidth\else\Gin@nat@width\fi}

\def\checkGraphicsHeight{\ifdim\Gin@nat@height>.9\textheight
	\tsGraphicsScaleY\textheight\else\Gin@nat@height\fi}

\def\fixFloatSize#1{\@ifundefined{processdelayedfloats}{\setbox0=\hbox{\includegraphics{#1}}\ifnum\wd0<\columnwidth\relax\renewenvironment{figure*}{\begin{figure}}{\end{figure}}\fi}{}}
\let\ts@includegraphics\includegraphics

\def\inlinegraphic[#1]#2{{\edef\@tempa{#1}\edef\baseline@shift{\ifx\@tempa\@empty0\else#1\fi}\edef\tempZ{\the\numexpr(\numexpr(\baseline@shift*\f@size/100))}\protect\raisebox{\tempZ pt}{\ts@includegraphics{#2}}}}

\AtBeginDocument{\def\includegraphics{\@ifnextchar[{\ts@includegraphics}{\ts@includegraphics[width=\checkGraphicsWidth,height=\checkGraphicsHeight,keepaspectratio]}}}

\def\URL#1#2{\@ifundefined{href}{#2}{\href{#1}{#2}}}

\def\UrlOrds{\do\*\do\-\do\~\do\'\do\"\do\-}%
\g@addto@macro{\UrlBreaks}{\UrlOrds}
\makeatother


\makeatletter

\def\wileyIndent{1pt}
\usepackage[paperheight=10in,paperwidth=6.5in,margin=2cm,headsep=.5cm,top=2.5cm]{geometry}

\renewenvironment{abstract}
{\vspace*{-1pc}\trivlist\item[]\leftskip\wileyIndent\hrulefill\par\vskip4pt\noindent\textbf{\abstractname}\mbox{\null}\\}{\par\noindent\hrulefill\endtrivlist}

\def\author#1{\gdef\@author{\hskip-\dimexpr(\tabcolsep)\hskip\wileyIndent\parbox{\dimexpr\textwidth-\wileyIndent}{\centering\bfseries#1}}}

\def\title#1{\gdef\@title{\centering\bfseries\ifx\@articleType\@empty\else\@articleType\\\fi#1}}

\let\@articleType\@empty \def\articletype#1{\gdef\@articleType{{\normalfont\itshape#1}}}

\fancypagestyle{headings}{\fancyhf{}\fancyhead[C]{\RunningHead}\fancyhead[R]{\thepage}}\pagestyle{headings}

\linespread{1.13}

 \def\audegree#1{}

\captionsetup[figure]{labelfont=sc,skip=1.4pt,aboveskip=1pc}
\captionsetup[table]{labelfont=sc,skip=1.4pt,labelsep=newline}

\date{}

\emergencystretch 25pt

\makeatother

\usepackage[T1]{fontenc}
\makeatother
\usepackage[numbers,sort&compress]{natbib}

\begin{document}

\title{On Arithmetical Truth of the \\  Self-Referential Sentences}

\author{\textsc{Kaave Lajevardi \,\&\,   Saeed Salehi}}

\def\RunningHead{\textsc{\small K.~Lajevardi  \& S.~Salehi } / {\sl On Arithmetical Truth of the Self-Referential Sentences \; }}
\def\RunningAuthor{S.~Salehi \& K.~Lajevardi}


\maketitle

\begin{abstract}
We take an argument of G\"odel's from his ground-breaking 1931 paper, 
generalize it, and examine its 
validity.
The argument in question is this: {\sl the sentence G says about itself that it is not provable, and G is indeed not provable;
therefore, G is true.}
\end{abstract}

\section{Introduction}
As is well known, G\"odel begins his~1931 masterpiece \cite{godel} with an introductory section, Section~1, wherein he explains the main ideas behind the (first) incompleteness theorem in an informal and intuitive way, with an explicit caveat that those remarks are being made ``without any claim to complete precision''. What he does in that section is, among other things, to introduce, in a very lucid way, the idea of encoding the syntax and that of diagonalization.

But G\"odel's introduction is not confined to what can be found, more formally
expressed, in the technical parts of the paper. G\"odel's official statement of the first incompleteness theorem (his Theorem~VI) asserts that for every theory satisfying certain conditions, there is a sentence, now called the G\"odel sentence of the theory, which is expressible in the language of the theory but is neither provable nor refutable in the theory---the theorem does {\em not} talk about the truth of that undecidable sentence. In his Section~1, however, G\"odel chooses to also talk about the truth of the G\"odel sentence of the system of {\em Principia Mathematica}, and observes that the proof of the first incompleteness theorem, as presented in that section, is similar to Richard's paradox. Whatever expository merits it might have, this move is a problematic one, and G\"odel shows awareness of this. He says that his method of proof is applicable to any formal system that has two conditions, the second of which being ``every provable formula is {\em true} in the interpretation considered'' (\cite[page~151]{godel}  emphasis added). He then writes, ``The purpose of carrying out the above proof with full precision in what follows is, among other things, {\em to replace the second of the assumptions just mentioned by a purely formal and much weaker one}'' (ibid,   emphasis added).
This ``much weaker'' condition, called $\omega$-consistency by
G\"odel, is one of the points of focus in our discussion (note that $\omega$-consistency is stronger than [simple] consistency, see e.g. \cite[Theorems~14,15]{isaac}).

Our purpose in this note is not to philosophize about the notion of truth or doing exegetical work on its r\^{o}le in G\"odel's classic paper. Rather, we wish to draw attentions to an informal argument, to the effect that the G\"odel sentence of the system {\em Principia Mathematica} is true (that is, true in $\mathbb{N}$), which is presented by G\"odel in his introductory section.
The argument in question goes as follows \cite[p.~151]{godel} (italics in the original):

\bigskip

\begin{tabular}{r}
\qquad \qquad \qquad From the remark that $[R(q);q]$ says about itself  that it \\
\qquad \qquad \qquad   is not provable,   it follows at once that $[R(q);q]$ is true, \\
\qquad \qquad \qquad for  $[R(q);q]$ {\em is}  indeed unprovable (being undecidable).
\end{tabular}

\bigskip

\noindent Here $[R(q); q]$ is what is now called the {\em  G\"odel sentence} of a theory (or ``system'') which is subject to the first incompleteness theorem (that can be taken as Peano's Arithmetic $\textsl{\textsf{PA}}$ or its recursively axiomatizable   $\omega$-consistent extensions). This is nowadays  denoted by  $\textsl{\textsf{G}}$, which is by definition   a sentence $P$ that is  equivalent to $\neg{\sf Pr} (\#P)$ (also provably so), where ${\sf Pr}$ is the provability predicate of the theory and $\#P$ is the standard term for  the G\"odel number of $P$.

As we understand the above passage, its logical form is the following, where $A$ is an arbitrary sentence expressible in the language of the theory:

\begin{enumerate}\itemindent=3em
\item[(1)]  $A$ says about itself that it has a property $F$.
\item[(2)]  $A$ indeed has the property $F$.
\item[(3)]  Therefore: $A$ is true.
\end{enumerate}

That G\"odel says that it follows {\em at once} (G\"odel's term ``sofort'' could also be translated to {\em immediately})   that $\textsl{\textsf{G}}$ is true suggests that, in G\"odel's view, we are not dealing with an enthymeme---it seems to us that, for G\"odel, the argument scheme has no missing premises. It is our task in the next section to argue that the (1)--(3) argument scheme is {\em invalid}, that is to say, there are situations wherein the premises are true while the conclusion false.
Naturally enough, the validity of the argument hinges on its terms, in particular on what it is for a sentence to be ``true'', and what is meant by a sentence ``saying of itself'' that it has a certain property. As for the first term,   it is almost obvious from G\"odel's introductory section that, like many  modern writers in mathematical logic, when he writes ``true'' simpliciter, he means {\em true in the standard model}, $\mathbb{N}$ (thus \cite{godel}'s footnote~4 on page~145: ``... no other notions occur but $+$~(addition) and $\cdot$~(multiplication), both for natural numbers, and in which the quantifiers $(x)$, too, apply to natural numbers only.'')   Regarding the notion of saying something of oneself, we shall consider two  interpretations that might be ascribed to G\"odel.

\section{The Invalidity of the Generalized Argument}
How are we to understand the expression  ``$A$ says that it has the property $F$''?
Modulo an agreement over the meaning of ``holding'', to which we shall return shortly,  we find it quite plausible to think that if the conditional
$A\longrightarrow F(\# A)$ holds, then $A$ says, {\em inter alia}, that $A$ has property $F$.
So, if we take ``holding'' as ``being true in the standard model of natural numbers $\mathbb{N}$'' and   ``saying'' as ``implying'', then the argument is not valid: if we take  $\gamma$ to be the sentence $\textsl{\textsf{G}}\wedge(0\!=\!1)$ then  both of the sentences $\gamma\rightarrow\neg{\sf Pr}(\#\gamma)$ and $\neg{\sf Pr}(\#\gamma)$ hold, but $\gamma$ does not hold.
Thus to say that $A$ says, {\em exactly}, that $A$ has property $F$ must be something stronger, to wit that the biconditional  $A\longleftrightarrow F(\# A)$ holds. And this is, in fact, what is taken by some authors (including G\"odel)  to be the meaning of
``$A$ says of $A$ that it has property $F$'' (see e.g. \cite{hv1,milne} and references therein).

As for the meaning of ``holding'', we already presented evidence
for the claim that what G\"odel meant by it is being true in the standard model,
$\mathbb{N}$. However, let us recognize another reading of it---which is also
suggested in the literature (see \cite{milne,hv1} and  references therein)---namely
{\em being provable in a given theory}.
Having fixed  a theory $T$
(which we suppose to be an $\omega$-consistent, recursively axiomatizable extension of Peano's Arithmetic $\textsl{\textsf{PA}}$), we then have eight possible ways of interpreting the (1)--(3) argument scheme. Here is the complete list, of which we find (VII), (V), and (III) the most interesting:

\smallskip

$${\rm (I)}\frac{\;\mathbb{N}\vDash A\leftrightarrow F(\# A), \quad \mathbb{N}\vDash F(\# A)\;}{\mathbb{N}\vDash A} \qquad {\rm (II)}\frac{\;\mathbb{N}\vDash A\leftrightarrow F(\# A), \quad \mathbb{N}\vDash F(\# A)\;}{T\vdash A}$$

$${\rm (III)}\frac{\;\mathbb{N}\vDash A\leftrightarrow F(\# A),\quad T\vdash F(\# A)\;}{\mathbb{N}\vDash A} \qquad {\rm (IV)}\frac{\;\mathbb{N}\vDash A\leftrightarrow F(\# A),\quad T\vdash F(\# A)\;}{T\vdash  A}$$

$${\rm (V)}\frac{\;T\vdash  A\leftrightarrow F(\# A),\quad \mathbb{N}\vDash  F(\# A)\;}{\mathbb{N}\vDash A} \qquad {\rm (VI)}\frac{\;T\vdash  A\leftrightarrow F(\# A),\quad \mathbb{N}\vDash  F(\# A)\;}{T\vdash   A}$$

$${\rm (VII)}\frac{\;T\vdash  A\leftrightarrow F(\# A),\quad T\vdash  F(\# A)\;}{\mathbb{N}\vDash A}  \qquad {\rm (VIII)}\frac{\;T\vdash  A\leftrightarrow F(\# A),\quad T\vdash   F(\# A)\;}{T\vdash   A} $$

\bigskip

Of these, (I) and (VIII) are of course  valid because of the truth-condition of the material conditional and Modus Ponens, respectively. For all other cases, we will present  triples $(A, F, T)$ which invalidate them.

\begin{theorem}\label{th1}
 The argument  {\rm (IV)} is invalid for  $A=\textsl{\textsf{G}}$,   $F(x)\equiv(x=\#\textsl{\textsf{G}})$, and $T=\textsl{\textsf{PA}}$.
\end{theorem}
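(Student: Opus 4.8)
The plan is to verify that both premises of (IV) hold while its conclusion fails, for the stated triple $(\textsl{\textsf{G}},F,\textsl{\textsf{PA}})$ with $F(x)\equiv(x=\#\textsl{\textsf{G}})$. The first step is to compute the relevant instance: since $A=\textsl{\textsf{G}}$, the formula $F(\#A)$ is simply the equation $\#\textsl{\textsf{G}}=\#\textsl{\textsf{G}}$.

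Next I would dispatch the second premise $T\vdash F(\#\textsl{\textsf{G}})$. Because $\#\textsl{\textsf{G}}$ is a closed numeral, reflexivity of equality gives $\textsl{\textsf{PA}}\vdash\#\textsl{\textsf{G}}=\#\textsl{\textsf{G}}$ at once, so this premise is trivially satisfied. For the first premise $\mathbb{N}\vDash\textsl{\textsf{G}}\leftrightarrow F(\#\textsl{\textsf{G}})$, I would note that $F(\#\textsl{\textsf{G}})$ is true in $\mathbb{N}$ (it is the true equation $\#\textsl{\textsf{G}}=\#\textsl{\textsf{G}}$), so the biconditional collapses to the single condition $\mathbb{N}\vDash\textsl{\textsf{G}}$. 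Here I would invoke the standard fact that the G\"odel sentence is true in the standard model: since $\textsl{\textsf{G}}$ is provably equivalent to $\neg{\sf Pr}(\#\textsl{\textsf{G}})$ and, by the consistency of $\textsl{\textsf{PA}}$, is not provable, we have $\mathbb{N}\vDash\neg{\sf Pr}(\#\textsl{\textsf{G}})$ and hence $\mathbb{N}\vDash\textsl{\textsf{G}}$. This establishes the first premise. Finally, to show that the conclusion $\textsl{\textsf{PA}}\vdash\textsl{\textsf{G}}$ fails, I would appeal directly to G\"odel's first incompleteness theorem: under the consistency of $\textsl{\textsf{PA}}$, the sentence $\textsl{\textsf{G}}$ is unprovable.

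There is no real technical obstacle here; the calculations are immediate once the right witness $F$ is chosen. The conceptual crux---and the point I would emphasize---is that the property $F(x)\equiv(x=\#\textsl{\textsf{G}})$ carries no information about unprovability. It merely pins down $\textsl{\textsf{G}}$ by its own G\"odel number, so $F(\#\textsl{\textsf{G}})$ degenerates into a logical triviality whose provability is wholly disconnected from the provability of $\textsl{\textsf{G}}$ itself. This is precisely what exposes the gap in (IV): a biconditional valid only in $\mathbb{N}$ together with a provable but vacuous instance $F(\#A)$ is far too weak to force genuine provability of $A$.
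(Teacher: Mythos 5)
Your proof is correct and takes essentially the same approach as the paper's: both verify that $F(\#\textsl{\textsf{G}})$, being the trivial identity $\#\textsl{\textsf{G}}=\#\textsl{\textsf{G}}$, is true in $\mathbb{N}$ and provable in $\textsl{\textsf{PA}}$, reduce the first premise to the truth of $\textsl{\textsf{G}}$ in $\mathbb{N}$, and invoke G\"odel's first incompleteness theorem for $\textsl{\textsf{PA}}\nvdash\textsl{\textsf{G}}$. The only difference is cosmetic: you spell out the standard argument for $\mathbb{N}\vDash\textsl{\textsf{G}}$ (unprovability plus the true equivalence with $\neg{\sf Pr}(\#\textsl{\textsf{G}})$), where the paper simply cites ``G\"odel's proof.''
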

\begin{proof}
Obviously, $\mathbb{N}\vDash F(\#$\textsl{\textsf{G}}$)$ and  $T\vdash F(\#$\textsl{\textsf{G}}$)$.
 By G\"odel's proof $\mathbb{N}\vDash\textsl{\textsf{G}}$ holds and  so  $\mathbb{N}\vDash\textsl{\textsf{G}}
\leftrightarrow F(\#\textsl{\textsf{G}})$. On the other hand, by G\"odel's theorem, $T\nvdash\textsl{\textsf{G}}$.
\end{proof}

\begin{theorem}\label{th2}
The arguments {\rm (II)} and {\rm (VI)} are invalid for  $A=\textsl{\textsf{G}}$, $F(x)\equiv~\neg{\sf Pr}(x)$, and $T=\textsl{\textsf{PA}}$.
\end{theorem}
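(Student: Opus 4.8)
The plan is to exhibit, for the single triple $(A,F,T)=(\textsl{\textsf{G}},\,\neg{\sf Pr}(\cdot),\,\textsl{\textsf{PA}})$, that all premises of both schemes hold while the conclusion fails. Since (II) and (VI) share the conclusion $T\vdash A$ and also share the second premise $\mathbb{N}\vDash F(\#A)$, a single construction will refute both at once. The two ingredients I would draw on are the defining property of the G\"odel sentence, namely the \emph{provable} equivalence $T\vdash\textsl{\textsf{G}}\leftrightarrow\neg{\sf Pr}(\#\textsl{\textsf{G}})$, and G\"odel's first incompleteness theorem in the form $T\nvdash\textsl{\textsf{G}}$.

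First I would dispatch the two left-hand (biconditional) premises. The first premise of (VI), $T\vdash A\leftrightarrow F(\#A)$, is literally the defining provable biconditional of $\textsl{\textsf{G}}$, so it holds by construction. Moreover, since $\mathbb{N}\vDash\textsl{\textsf{PA}}$, soundness carries this provable equivalence into the standard model, giving $\mathbb{N}\vDash A\leftrightarrow F(\#A)$, which is exactly the first premise of (II). Thus one appeal to the definition of $\textsl{\textsf{G}}$, followed by soundness, secures both left premises simultaneously.

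Next I would verify the common second premise $\mathbb{N}\vDash F(\#A)$, which here reads $\mathbb{N}\vDash\neg{\sf Pr}(\#\textsl{\textsf{G}})$. Because ${\sf Pr}$ is the provability predicate of $T$, the standard model satisfies ${\sf Pr}(\#\textsl{\textsf{G}})$ precisely when $\textsl{\textsf{G}}$ is genuinely provable in $T$; as the first incompleteness theorem gives $T\nvdash\textsl{\textsf{G}}$, we obtain $\mathbb{N}\vDash\neg{\sf Pr}(\#\textsl{\textsf{G}})$. Finally, the conclusion of each scheme is $T\vdash A$, i.e. $T\vdash\textsl{\textsf{G}}$, which the same incompleteness theorem denies. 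Hence in both (II) and (VI) the premises are true and the conclusion false, establishing invalidity.

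I do not expect a genuine obstacle. The only point requiring a little care is the reading of ${\sf Pr}$ in $\mathbb{N}$: to justify $\mathbb{N}\vDash\neg{\sf Pr}(\#\textsl{\textsf{G}})$ one must invoke the adequacy of the provability predicate in both directions---$\Sigma_1$-completeness (a proof yields a witness satisfying ${\sf Pr}$) together with the correctness of the representation in the standard model (a standard witness for ${\sf Pr}$ codes an actual proof)---so that $\mathbb{N}\vDash{\sf Pr}(\#\textsl{\textsf{G}})$ is equivalent to $T\vdash\textsl{\textsf{G}}$. Everything else is an immediate consequence of the definition of $\textsl{\textsf{G}}$, the soundness of $\textsl{\textsf{PA}}$, and G\"odel's theorem.
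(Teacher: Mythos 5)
Your proposal is correct and follows essentially the same route as the paper: the biconditional premises come from the defining property of $\textsl{\textsf{G}}$, and G\"odel's first incompleteness theorem ($\textsl{\textsf{PA}}\nvdash\textsl{\textsf{G}}$) does double duty, falsifying the conclusion $T\vdash\textsl{\textsf{G}}$ while yielding the common second premise $\mathbb{N}\vDash\neg{\sf Pr}(\#\textsl{\textsf{G}})$ via the standard-model adequacy of the provability predicate. Your explicit appeal to the soundness of $\textsl{\textsf{PA}}$ to transfer the provable biconditional into $\mathbb{N}$ is a minor elaboration of what the paper takes as definitional, not a different argument.
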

\begin{proof}
We already have  $\mathbb{N}\vDash\textsl{\textsf{G}}\leftrightarrow
\neg{\sf Pr}(\#\textsl{\textsf{G}})$. Since by G\"odel's theorem we have $\textsl{\textsf{PA}}\nvdash\textsl{\textsf{G}}$, it follows that $\neg{\sf Pr}(\#\textsl{\textsf{G}})$ is true, i.e.,  $\mathbb{N}\vDash\neg{\sf Pr}(\#\textsl{\textsf{G}})$.
\end{proof}

Of course the arguments (III), (V) and (VII) are all valid if $T$ is a sound theory (i.e., when $\mathbb{N}\vDash T$). As we mentioned in the Introduction, G\"odel replaces the soundness condition with the much weaker condition of $\omega$-consistency. As Isaacson mentions in \cite{isaac},  G\"odel states in \cite{godel} that the notion of $\omega$-consistency is ``much weaker'' than  soundness but gives no argument for this claim.
It is shown in \cite[Proposition~19]{isaac} (with a proof attributed to Kreisel in the 1950s) that there exists a false sentence $\textsl{\textsf{K}}$ such that the theory $\textsl{\textsf{PA}}+\textsl{\textsf{K}}$ is $\omega$-consistent. Also, the sentence $\textsl{\textsf{K}}$ is a diagonal sentence of a formula $H(x)$; i.e., the equivalence $\textsl{\textsf{K}}\leftrightarrow H(\#\textsl{\textsf{K}})$ holds (is $\textsl{\textsf{PA}}$-provable and true in $\mathbb{N}$).

\begin{theorem}\label{th3}
The argument {\rm (V)} does not hold  for $A=\textsl{\textsf{K}}$,   $F(x)\equiv(x=\#\textsl{\textsf{K}})$, and $T=\textsl{\textsf{PA}}+\textsl{\textsf{K}}$.
\end{theorem}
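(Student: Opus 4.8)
The plan is to show that, for these choices, both premises of (V) are satisfied while its conclusion $\mathbb{N}\vDash\textsl{\textsf{K}}$ fails. The whole point of selecting $F(x)\equiv(x=\#\textsl{\textsf{K}})$ is that the relevant instance $F(\#\textsl{\textsf{K}})$ collapses to a trivial identity, so the provable biconditional demanded by the first premise comes for free once $\textsl{\textsf{K}}$ itself is provable in $T$; and $\textsl{\textsf{K}}$ is provable in $T$ simply because it is an axiom of $T=\textsl{\textsf{PA}}+\textsl{\textsf{K}}$. The one nontrivial ingredient, Kreisel's false-but-$\omega$-consistent sentence $\textsl{\textsf{K}}$, has already been recalled in the paragraph preceding the statement, so the remaining work is routine verification.

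First I would check the left premise $T\vdash\textsl{\textsf{K}}\leftrightarrow F(\#\textsl{\textsf{K}})$. Since $F(\#\textsl{\textsf{K}})$ is the sentence $\#\textsl{\textsf{K}}=\#\textsl{\textsf{K}}$, it is a logical validity, so $\textsl{\textsf{PA}}\vdash F(\#\textsl{\textsf{K}})$ and a fortiori $T\vdash F(\#\textsl{\textsf{K}})$. Because $\textsl{\textsf{K}}$ is an axiom of $T$, we also have $T\vdash\textsl{\textsf{K}}$. With both sides of the biconditional provable in $T$, the biconditional is itself a $T$-theorem, and the first premise holds.

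Next I would dispatch the right premise and the conclusion together. The sentence $F(\#\textsl{\textsf{K}})$, being the identity $\#\textsl{\textsf{K}}=\#\textsl{\textsf{K}}$, is patently true in $\mathbb{N}$, so $\mathbb{N}\vDash F(\#\textsl{\textsf{K}})$ and the second premise holds as well. The conclusion $\mathbb{N}\vDash\textsl{\textsf{K}}$, however, fails, precisely because $\textsl{\textsf{K}}$ was chosen to be false in the standard model. Thus the triple $(A,F,T)=(\textsl{\textsf{K}},\ x=\#\textsl{\textsf{K}},\ \textsl{\textsf{PA}}+\textsl{\textsf{K}})$ yields a situation with true premises and false conclusion, establishing the invalidity of (V).

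I do not expect any genuine obstacle in the verification itself; the difficulty has been entirely offloaded onto the existence of $\textsl{\textsf{K}}$. It is worth noting why $\omega$-consistency is doing exactly the right amount of work here: were $T$ required to be sound, then $T\vdash\textsl{\textsf{K}}$ would force $\mathbb{N}\vDash\textsl{\textsf{K}}$ and (V) would be rescued. What makes the counterexample possible is that $T=\textsl{\textsf{PA}}+\textsl{\textsf{K}}$ is $\omega$-consistent without being sound, and producing such a $T$ is precisely the content of the cited Kreisel construction.
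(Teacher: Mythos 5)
Your proof is correct and follows essentially the same route as the paper's: both verify the provable biconditional by noting that $F(\#\textsl{\textsf{K}})$ is a trivially provable identity and $\textsl{\textsf{K}}$ is an axiom of $T$, observe $\mathbb{N}\vDash F(\#\textsl{\textsf{K}})$ trivially, and invoke Kreisel's construction (Isaacson's Proposition~19) for $\mathbb{N}\nvDash\textsl{\textsf{K}}$. Your closing remark on why $\omega$-consistency (rather than soundness) of $T=\textsl{\textsf{PA}}+\textsl{\textsf{K}}$ is exactly what makes the counterexample possible is a nice addition, but the argument itself matches the paper's.
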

\begin{proof}
By   $\textsl{\textsf{PA}}\vdash F(\#\textsl{\textsf{K}})$ we have $\textsl{\textsf{PA}}+\textsl{\textsf{K}}\vdash \textsl{\textsf{K}}\leftrightarrow F(\#\textsl{\textsf{K}})$. Now, $\mathbb{N}\vDash F(\#\textsl{\textsf{K}})$ holds trivially. By \cite[Propositions~19]{isaac}, $\mathbb{N}\nvDash\textsl{\textsf{K}}$.
\end{proof}

\begin{theorem}\label{th4}
Neither  {\rm (III)} nor {\rm (VII)}   hold for
$A=\textsl{\textsf{K}}$,   $F(x)\equiv H(x)$, and $T=\textsl{\textsf{PA}}+\textsl{\textsf{K}}$.
\end{theorem}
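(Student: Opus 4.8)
The plan is to show that, for the triple $(A,F,T)=(\textsl{\textsf{K}},H,\textsl{\textsf{PA}}+\textsl{\textsf{K}})$, both premises of (III) and both premises of (VII) hold, while their common conclusion $\mathbb{N}\vDash A$ fails. The conclusion of each argument is $\mathbb{N}\vDash\textsl{\textsf{K}}$, and since $\textsl{\textsf{K}}$ was produced (by Kreisel's construction, recorded in \cite[Proposition~19]{isaac}) precisely as a \emph{false} sentence, we have $\mathbb{N}\nvDash\textsl{\textsf{K}}$ immediately. Thus the entire task reduces to verifying the four premises, after which the theorem follows from the two failed instances at once.

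First I would recall the established facts about $\textsl{\textsf{K}}$: the diagonal equivalence $\textsl{\textsf{K}}\leftrightarrow H(\#\textsl{\textsf{K}})$ is both $\textsl{\textsf{PA}}$-provable and true in $\mathbb{N}$. From its truth in $\mathbb{N}$ I get the left premise $\mathbb{N}\vDash A\leftrightarrow F(\#A)$ of (III); from its $\textsl{\textsf{PA}}$-provability, together with the fact that $T$ extends $\textsl{\textsf{PA}}$, I get the left premise $T\vdash A\leftrightarrow F(\#A)$ of (VII). The one step with a little content is the right premise shared by both arguments, $T\vdash F(\#A)$, i.e.\ $T\vdash H(\#\textsl{\textsf{K}})$: here I would use that $\textsl{\textsf{K}}$ is an axiom of $T$, so $T\vdash\textsl{\textsf{K}}$, and combine it by Modus Ponens with the provable half $T\vdash\textsl{\textsf{K}}\to H(\#\textsl{\textsf{K}})$ to obtain $T\vdash H(\#\textsl{\textsf{K}})$. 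This settles all four premises.

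I do not expect a genuine obstacle: all the difficulty was absorbed into the prior existence of $\textsl{\textsf{K}}$. The only point requiring care is to keep the two readings of ``holding'' apart---provability on the $T\vdash$ side, truth on the $\mathbb{N}\vDash$ side---so that it is consistent to have $T$ prove both $H(\#\textsl{\textsf{K}})$ and the equivalence while $\textsl{\textsf{K}}$ remains false in $\mathbb{N}$. That this is possible is exactly the gap between $\omega$-consistency and soundness that the theorem is designed to exhibit, and it is what makes (III) and (VII) fail where soundness of $T$ would have rendered them valid.
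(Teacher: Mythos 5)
Your proof is correct and follows essentially the same route as the paper: you invoke the defining properties of $\textsl{\textsf{K}}$ (the diagonal equivalence being both $\textsl{\textsf{PA}}$-provable and true in $\mathbb{N}$, and $\textsl{\textsf{K}}$ being false) to verify the premises of (III) and (VII), and derive $T\vdash H(\#\textsl{\textsf{K}})$ from $T\vdash\textsl{\textsf{K}}$ together with the provable equivalence, exactly as the paper does. Your write-up merely makes explicit the Modus Ponens step that the paper leaves implicit.
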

\begin{proof}
We already have $\textsl{\textsf{PA}}\vdash \textsl{\textsf{K}}\leftrightarrow H(\#\textsl{\textsf{K}})$ and
$\mathbb{N}\vDash\textsl{\textsf{K}}\leftrightarrow H(\#\textsl{\textsf{K}})$ by definition,  and so
$\textsl{\textsf{PA}}+\textsl{\textsf{K}}\vdash \textsl{\textsf{K}}\leftrightarrow H(\#\textsl{\textsf{K}})$ holds too. The latter also implies that $\textsl{\textsf{PA}}+\textsl{\textsf{K}}\vdash H(\#\textsl{\textsf{K}})$. Finally, by   \cite[Propositions~19]{isaac} we have  $\mathbb{N}\nvDash\textsl{\textsf{K}}$.
\end{proof}

Let us acknowledge the fact that perhaps it is only by overgeneralizing G\"odel's informal argument that we are making it invalid. Had we not abstracted from the specific properties of   $A$, $F$, and $T$, G\"odel's informal argument would be valid, even in the interesting cases of (III), (V), and (VII) (though it would lose much of its appeal). This is substantiated in the following:

\begin{proposition}\label{prop}
If $A,F$ are both $\Pi_1$ and $T$ is an $\omega$-consistent extension of   $\textsl{\textsf{PA}}$, then the arguments {\rm (III), (V),} and {\rm (VII)} are valid.
\end{proposition}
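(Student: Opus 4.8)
The plan is to isolate two soundness properties of $T$ and then apply them almost mechanically to the three arguments. First observe that, since $F$ is $\Pi_1$ and $\#A$ is a closed term (a numeral), the sentence $F(\#A)$ is again $\Pi_1$; and $A$ is $\Pi_1$ by hypothesis. The two facts I would establish at the outset are: (a) \emph{$\Pi_1$-soundness}, that $T\vdash\pi$ implies $\mathbb{N}\vDash\pi$ for every $\Pi_1$ sentence $\pi$; and (b) \emph{$\Sigma_1$-soundness}, that $T\vdash\sigma$ implies $\mathbb{N}\vDash\sigma$ for every $\Sigma_1$ sentence $\sigma$. For (a) mere consistency of the extension $T\supseteq\textsl{\textsf{PA}}$ suffices: were $\pi$ false, its negation would be a true $\Sigma_1$ sentence, hence $\textsl{\textsf{PA}}$-provable by $\Sigma_1$-completeness and so $T$-provable, contradicting $T\vdash\pi$. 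For (b) I would use $\omega$-consistency directly: a $T$-provable false $\Sigma_1$ sentence $\exists x\,\psi(x)$ has every instance $\neg\psi(\bar n)$ true, hence $\textsl{\textsf{PA}}$-provable, and $T\vdash\exists x\,\psi(x)$ together with $T\vdash\neg\psi(\bar n)$ for all $n$ is exactly what $\omega$-consistency forbids.

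With these two lemmas in hand, arguments (VII) and (III) fall out immediately. For (VII), the premises $T\vdash A\leftrightarrow F(\#A)$ and $T\vdash F(\#A)$ yield $T\vdash A$ by modus ponens inside $T$, and since $A$ is $\Pi_1$, fact (a) gives $\mathbb{N}\vDash A$. For (III), the premise $T\vdash F(\#A)$ together with $F(\#A)\in\Pi_1$ gives $\mathbb{N}\vDash F(\#A)$ by (a); combining this with the semantic biconditional $\mathbb{N}\vDash A\leftrightarrow F(\#A)$ produces $\mathbb{N}\vDash A$. Note that neither of these two cases actually invokes $\omega$-consistency—mere consistency carries them through.

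The only argument that genuinely calls on the strength of $\omega$-consistency is (V), and this is where I expect the one nontrivial step. Here the biconditional is merely $T$-provable, not known to hold in $\mathbb{N}$, so I would argue by contraposition on the conclusion. Suppose $\mathbb{N}\nvDash A$; since $A$ is $\Pi_1$, its negation $\neg A$ is a true $\Sigma_1$ sentence, whence $T\vdash\neg A$ by $\Sigma_1$-completeness, and feeding this through $T\vdash A\leftrightarrow F(\#A)$ yields $T\vdash\neg F(\#A)$. But $\neg F(\#A)$ is $\Sigma_1$, so fact (b) forces $\mathbb{N}\vDash\neg F(\#A)$, contradicting the second premise $\mathbb{N}\vDash F(\#A)$; hence $\mathbb{N}\vDash A$. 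The main obstacle is precisely the asymmetry that (V) exploits: provability of the $\Pi_1$ sentences occurring in (III) and (VII) already entails their truth under mere consistency, whereas the $\Sigma_1$ sentence $\neg F(\#A)$ that emerges in (V) can only be controlled by $\Sigma_1$-soundness, which is exactly the content that $\omega$-consistency—and not consistency alone—supplies.
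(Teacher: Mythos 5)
Your proof is correct, but it takes a genuinely different route from the paper's. The paper's proof is a one-step reduction: since $A$ and $F(\#A)$ are $\Pi_1$, both $F(\#A)$ and the biconditional $A\leftrightarrow F(\#A)$ are (equivalent to) $\Sigma_2$ sentences, and by a result cited from Isaacson (Theorem~17 of \cite{isaac}), $\omega$-consistency of $T\supseteq\textsl{\textsf{PA}}$ guarantees that every $T$-provable $\Sigma_2$ sentence is true; hence every premise of the form $T\vdash X$ in (III), (V), and (VII) can be upgraded to $\mathbb{N}\vDash X$, and all three arguments reduce at once to the trivially valid (I). You avoid that $\Sigma_2$-soundness black box entirely: your two lemmas ($\Pi_1$-soundness from mere consistency via $\Sigma_1$-completeness, and $\Sigma_1$-soundness read off directly from the definition of $\omega$-consistency) are elementary, and your contraposition argument for (V) is the only place where anything beyond consistency enters. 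What the paper's approach buys is brevity and uniformity: one appeal to a stronger theorem dispatches all three cases, including the $T$-provable biconditional in (V) and (VII), with no case analysis. What your approach buys is self-containedness and a strictly sharper statement: you show that (III) and (VII) are valid for any \emph{consistent} extension of $\textsl{\textsf{PA}}$, and that (V) needs only $\Sigma_1$-soundness (1-consistency), which is weaker than $\omega$-consistency --- gradations the paper's uniform $\Sigma_2$ argument cannot see. One small caution: your closing remark that $\Sigma_1$-soundness is ``exactly'' what $\omega$-consistency supplies overstates the relationship ($\omega$-consistency is strictly stronger than 1-consistency); your proof only needs, and only establishes, the one-way implication.
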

\begin{proof}
If $A,F(x)\in\Pi_1$ then  $F(\#A)$ and $A\leftrightarrow F(\#A)$ are both $\Sigma_2$. By \cite[Theorem~17]{isaac}, all the $T$-provable $\Sigma_2$-sentences are true. So, the provability of   $F(\#A)$ or $A\leftrightarrow F(\#A)$ in $T$ implies their truth. Whence, (III), (V), and (VII) all reduce to (I).
\end{proof}

\section{Back to the Truth of the G\"odel Sentence}
The sentence $\textsl{\textsf{G}}$   is called {\em the} G\"odel sentence of the theory in consideration (say, of $\textsl{\textsf{PA}}$).  One probable rationale  for this is that if $\textsl{\textsf{G}}'$ is any other sentence that is equivalent to its unprovability, that is $\textsl{\textsf{G}}'$ is equivalent to $\neg{\sf Pr}(\#\textsl{\textsf{G}}')$,  then $\textsl{\textsf{G}}$ and $\textsl{\textsf{G}}'$ are equivalent (see e.g., \cite{lind}). But this is not a  convincing reason when the theory is not sound:
\begin{remark}\label{rem}{\rm
Let $S=\textsl{\textsf{PA}}+\neg{\sf Con}(\textsl{\textsf{PA}})$, where ${\sf Con}(\textsl{\textsf{PA}})$ is the consistency statement of $\textsl{\textsf{PA}}$, $\neg{\sf Pr}(\#[0\!=\!1])$. By G\"odel's second incompleteness theorem the theory $S$ is consistent (but not sound). We show that for any true $\Sigma_1$-sentence $\sigma$, the sentence $\sigma'$ which is defined to be ${\sf Con}(S)\wedge\sigma$, is equivalent to its unprovability in $S$; later we also show that this  holds for any false $\Pi_1$-sentence $\varrho$ too.

To see this take $\sigma$ to be any true $\Sigma_1$-sentence. Then by L\"ob's Theorem  we have  $S\vdash{\sf Con}(S)\rightarrow\neg{\sf Pr}_S(\#{\sf Con}(S))$ and  by the $\Sigma_1$-completeness,  $S\vdash\sigma$; so $$S\vdash{\sf Con}(S)\wedge\sigma\rightarrow\neg{\sf Pr}_S(\#[{\sf Con}(S)\wedge\sigma]).$$ On the other hand $S\vdash\neg{\sf Pr}_S(\#\psi)\rightarrow{\sf Con}(S)$, for any $\psi$, which, together with $S\vdash\sigma$, implies the derivability of the converse implication $$S\vdash\neg{\sf Pr}_S(\#[{\sf Con}(S)\wedge\sigma])\rightarrow{\sf Con}(S)\wedge\sigma.$$ Thus, $\sigma'={\sf Con}(S)\wedge\sigma$ is equivalent to its unprovability in $S$: $S\vdash\sigma'\leftrightarrow\neg{\sf Pr}_S(\#\sigma')$.

Now, take $\varrho$ to be a false $\Pi_1$-sentence. Then $\mathbb{N}\vDash\neg\varrho$ and $\neg\varrho\in\Sigma_1$; so $S\vdash\neg\varrho$. Also, $S\vdash\neg{\sf Con}(S)$ (because $S\vdash\neg{\sf Con}(\textsl{\textsf{PA}})$ and $\textsl{\textsf{PA}}\subset S$); whence $S\vdash{\sf Pr}_S(\#\theta)$ for any $\theta$ and in particular $S\vdash{\sf Pr}_S(\#\varrho)$. So, $S\vdash\varrho\leftrightarrow\neg{\sf Pr}_S(\#\varrho)$ which shows that $\varrho$ is equivalent to its unprovability in $S$.

It can be easily seen that none of the true sentences $\sigma'={\sf Con}(S)\wedge\sigma$ (where we have  $\mathbb{N}\vDash\sigma\in\Sigma_1$) is equivalent to  any of the false $\Pi_1$-sentences $\varrho$ (where  $\mathbb{N}\nvDash\varrho$), even  though they  are all provably equivalent  inside the theory $S$.
}\hfill\ding{71}
\end{remark}

Thus, a sentence which is equivalent to its unprovability inside a theory, does not deserve to be called ``{\em the} G\"odel sentence'' of that theory. A better definition  could be:
\begin{definition}\label{def}{\rm
The G\"odel sentence of a theory $T$ is a sentence $P$ such that
\newline\centerline{(i)\;$T\vdash P\leftrightarrow\neg{\sf Pr}_T(\#P)$ \; and also \;  (ii)\;$\mathbb{N}\vDash P\leftrightarrow\neg{\sf Pr}_T(\#P)$.}
All such sentences are equivalent  (in $\mathbb{N}$ and provably in $T$ if $T\supseteq\textsl{\textsf{PA}}$).
}\hfill\ding{71} \end{definition}

Now, by this definition one can show that the G\"odel sentence(s) of a theory is(are) true if and only if the theory is consistent (cf.~\cite[Theorems~10,11]{isaac}):
\begin{theorem}\label{con}
Let $P$ be the G\"odel sentence of a theory $T$. Then $P$ is true if and only if $T$ is consistent.
\end{theorem}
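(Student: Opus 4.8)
The plan is to prove both directions of the biconditional, using the defining property of the Gödel sentence together with Gödel's second incompleteness theorem. Let $P$ be a Gödel sentence of $T$ in the sense of Definition~\ref{def}, so that both $T\vdash P\leftrightarrow\neg{\sf Pr}_T(\#P)$ and $\mathbb{N}\vDash P\leftrightarrow\neg{\sf Pr}_T(\#P)$ hold. By clause~(ii), establishing the truth of $P$ is equivalent to establishing the truth of $\neg{\sf Pr}_T(\#P)$, i.e.\ showing that $\mathbb{N}\vDash\neg{\sf Pr}_T(\#P)$, which says precisely that $P$ is \emph{not} provable in $T$. So the whole theorem reduces to the claim that $T\nvdash P$ if and only if $T$ is consistent.

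First I would prove the easier forward-contrapositive direction: if $T$ is inconsistent, then $T$ proves everything, in particular $T\vdash P$, so ${\sf Pr}_T(\#P)$ is a true $\Sigma_1$-sentence, whence by clause~(ii) $P$ is false. This settles the ``only if'' direction (in contrapositive form). For the converse, suppose $T$ is consistent. The key step is to show $T\nvdash P$. Here I would argue that a proof of $P$ in $T$ would, via clause~(i), yield $T\vdash\neg{\sf Pr}_T(\#P)$; but by $\Sigma_1$-completeness (provable inside $T$) a genuine proof of $P$ also gives $T\vdash{\sf Pr}_T(\#P)$, so $T$ would prove $\mathbf{0}=\mathbf{1}$, contradicting consistency. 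Hence $T\nvdash P$, so ${\sf Pr}_T(\#P)$ is a false $\Sigma_1$-sentence, i.e.\ $\mathbb{N}\vDash\neg{\sf Pr}_T(\#P)$, and therefore $\mathbb{N}\vDash P$ by clause~(ii).

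The main subtlety, which I would want to handle carefully, is the exact strength hypothesis on $T$. The argument just sketched is essentially Gödel's original proof that $T\nvdash P$ from the consistency of $T$, and this direction needs only simple consistency provided ${\sf Pr}_T$ is a standard (provably $\Sigma_1$-complete) provability predicate and $T$ extends a weak base such as $\textsl{\textsf{PA}}$. I expect the cleanest route is to invoke this directly rather than re-deriving the fixed-point machinery, since Definition~\ref{def} already bundles the relevant equivalences into $P$. The one place to be alert is that we are \emph{not} asserting $T\nvdash\neg P$ (that would require $\omega$-consistency, as the earlier discussion emphasizes); the theorem is only about the \emph{truth} of $P$, which is controlled entirely by provability of $P$ and clause~(ii), so simple consistency suffices throughout.

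Assembling these, I would present the equivalences compactly: $P$ is true $\iff\mathbb{N}\vDash\neg{\sf Pr}_T(\#P)\iff T\nvdash P\iff T$ is consistent, where the middle equivalence is just the meaning of the provability predicate at a true/false $\Sigma_1$ instance and the last is the combination of the two directions above.
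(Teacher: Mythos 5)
Your proof is correct and follows essentially the same route as the paper's: both directions reduce the truth of $P$ via clause~(ii) to the unprovability of $P$ in $T$, with the key step being that $T\vdash P$ would give both $T\vdash\neg{\sf Pr}_T(\#P)$ (by clause~(i)) and $T\vdash{\sf Pr}_T(\#P)$ (by $\Sigma_1$-completeness, since ${\sf Pr}_T(\#P)$ would be a true $\Sigma_1$-sentence), contradicting consistency. Your repackaging of the argument as the chain of equivalences $P$ true $\iff T\nvdash P\iff T$ consistent, and your explicit remark that only simple consistency (not $\omega$-consistency) is needed, are accurate refinements of the same idea.
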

\begin{proof}
If $P$ is true then, by Definition~\ref{def}(ii), so is $\neg{\sf Pr}_T(\#P)$ thus $T$ is consistent; and if $T$ is consistent then it cannot prove $P$ (otherwise if $T\vdash P$ then on the one hand by Definition~\ref{def}(i) $T\vdash\neg{\sf Pr}_T(\#P)$ and on the other hand  ${\sf Pr}_T(\#P)$ is a true $\Sigma_1$-sentence and so is provable in $T$ which contradicts the consistency of $T$) and so $\neg{\sf Pr}_T(\#P)$ is true whence, by Definition~\ref{def}(ii),  $P$ is true.
\end{proof}

It has been argued in the literature (see e.g. \cite{boolos} or \cite{raatikainen2} and references therein) that since the consistency of the theory implies (and moreover is equivalent to) its G\"odel's sentence(s), even provably so inside the theory, then for ``seeing '' the truth of the G\"odel sentence(s) the consistency of the theory should be seen.
Let us note, that a sentence which {\em is
equivalent to its own provability inside the theory} is not thereby (equivalent to) the
G\"odel sentence(s) of the theory.
 In Remark~\ref{rem} we had  $S\vdash {\sf Con}(S)\leftrightarrow\varrho$ for any false $\Pi_1$-sentence $\varrho$  (and also $S\vdash\varrho\leftrightarrow\neg{\sf Pr}_S(\#\varrho)$) but ${\sf Con}(S)$ is a true $\Pi_1$-sentence while $\varrho$ is not. Our last result provides a necessary and sufficient condition for the truth of all the $\Pi_1$-sentences that are equivalent to their unprovability inside the theory.
\begin{theorem}\label{nec}
For    a recursively axiomatizable extension $T$ of $\textsl{\textsf{PA}}$, all of  the $\Pi_1$-sentences $\theta$ which
 satisfy $T\vdash\theta\leftrightarrow\neg{\sf Pr}_T(\#\theta)$ are true  if and only if $T+{\sf Con}(T)$ is consistent.
\end{theorem}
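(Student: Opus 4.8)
The plan is to prove the two directions separately, and the organizing observation is the elementary equivalence that $T+{\sf Con}(T)$ is inconsistent exactly when $T\vdash\neg{\sf Con}(T)$ (by the deduction theorem), which in turn — via the standard Hilbert--Bernays--L\"ob derivability conditions — amounts to $T$ proving that every sentence is provable, i.e.\ $T\vdash{\sf Pr}_T(\#\psi)$ for all $\psi$. This is precisely the phenomenon already exploited for $S$ in Remark~\ref{rem}, so both implications will reduce to manipulations with ${\sf Pr}_T$ together with $\Sigma_1$-completeness; in particular no consistency hypothesis on $T$ is needed.

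For the implication $(\Leftarrow)$, assuming $T+{\sf Con}(T)$ is consistent, I would argue by contradiction. Suppose some $\Pi_1$-sentence $\theta$ with $T\vdash\theta\leftrightarrow\neg{\sf Pr}_T(\#\theta)$ were false. Since $\theta$ is $\Pi_1$, its negation $\neg\theta$ is a \emph{true} $\Sigma_1$-sentence, so $\Sigma_1$-completeness gives $T\vdash\neg\theta$. Feeding $\neg\theta$ into the biconditional yields $T\vdash{\sf Pr}_T(\#\theta)$, while the first derivability condition applied to $T\vdash\neg\theta$ yields $T\vdash{\sf Pr}_T(\#\neg\theta)$. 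Combining these through the second derivability condition (using that $\theta,\neg\theta\vdash 0\!=\!1$ is a logical validity) gives $T\vdash{\sf Pr}_T(\#[0\!=\!1])$, that is $T\vdash\neg{\sf Con}(T)$, so $T+{\sf Con}(T)$ is inconsistent — the desired contradiction.

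For the implication $(\Rightarrow)$, I would prove the contrapositive. Assume $T+{\sf Con}(T)$ is inconsistent, so $T\vdash\neg{\sf Con}(T)$, i.e.\ $T\vdash{\sf Pr}_T(\#[0\!=\!1])$. Take $\theta$ to be the false $\Pi_1$-sentence $0\!=\!1$. Then $\theta$ is refutable in $T$, and so is $\neg{\sf Pr}_T(\#\theta)$, since $\neg{\sf Pr}_T(\#[0\!=\!1])$ is exactly ${\sf Con}(T)$ and $T\vdash\neg{\sf Con}(T)$; two $T$-refutable sentences are $T$-provably equivalent, so $T\vdash\theta\leftrightarrow\neg{\sf Pr}_T(\#\theta)$. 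As $\mathbb{N}\nvDash\theta$, this exhibits a false sentence of the required form, so not all of them are true. (More generally, once $T\vdash\neg{\sf Con}(T)$, the same computation shows every false $\Pi_1$-sentence $\varrho$ works, exactly as in Remark~\ref{rem}.)

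The only mildly delicate point is the bookkeeping with ${\sf Pr}_T$ in the $(\Leftarrow)$ direction: one must check that the formalized step ``$T$ proves $\theta$ while $\theta$ is $T$-refutable, hence $T$ proves its own inconsistency'' is licensed by the first two derivability conditions alone, and does not secretly require the full strength of L\"ob's theorem. Everything else is routine, flowing from $\Sigma_1$-completeness and the single observation that a theory proving $\neg{\sf Con}(T)$ collapses $\neg{\sf Pr}_T(\#\theta)$ to a refutable sentence.
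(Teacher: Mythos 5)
Your proof is correct, and the two proofs share the same overall decomposition (both directions are handled contrapositively), but in one direction you use genuinely lighter machinery than the paper. For the direction ``$T+{\sf Con}(T)$ inconsistent $\Rightarrow$ some false $\Pi_1$ fixed point exists'', your argument matches the paper's: the paper shows that once $T\vdash\neg{\sf Con}(T)$, \emph{every} false $\Pi_1$-sentence $\varrho$ satisfies $T\vdash\varrho\leftrightarrow\neg{\sf Pr}_T(\#\varrho)$, both sides being $T$-refutable; your instantiation with $\theta=(0\!=\!1)$ is a special case (harmless, since $\Delta_0$-sentences count as $\Pi_1$), and you note the generalization yourself. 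The real difference is in the converse. You suppose a false $\Pi_1$ fixed point $\theta$ and derive $T\vdash\neg{\sf Con}(T)$ by hand: $T\vdash\neg\theta$ by $\Sigma_1$-completeness, hence $T\vdash{\sf Pr}_T(\#\theta)$ from the biconditional, $T\vdash{\sf Pr}_T(\#\neg\theta)$ by the first derivability condition, and then $T\vdash{\sf Pr}_T(\#[0\!=\!1])$ by the second. The paper instead gets from $T\vdash\neg\theta$ to $T\vdash\neg{\sf Con}(T)$ in one step by invoking the fixed-point uniqueness fact $T\vdash\theta\leftrightarrow{\sf Con}(T)$ (any sentence provably equivalent to its own unprovability is provably equivalent to the consistency statement, cf.\ \cite{lind}). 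Your closing worry is resolved in your favour: your chain needs only the first two derivability conditions, not L\"ob's theorem; by contrast, the uniqueness fact the paper cites does rest on L\"ob's theorem (equivalently, on the third condition, formalized $\Sigma_1$-completeness). So your version is more self-contained and uses strictly weaker hypotheses about the provability predicate, at the cost of some bookkeeping, while the paper's is shorter modulo a known but heavier result.
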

\begin{proof}
If $T+{\sf Con}(T)$ is not consistent then
 $T\vdash\neg{\sf Con}(T)$ and so,
 $T\vdash{\sf Pr}(\#\psi)$ for any $\psi$. Also, for
   any false $\Pi_1$-sentence $\varrho$ we have $T\vdash\neg\varrho$. Whence,  $T\vdash\varrho\leftrightarrow\neg{\sf Pr}_T(\#\varrho)$ holds which shows that  any false $\Pi_1$-sentence is equivalent to its unprovability in $T$. Suppose now  that a false $\Pi_1$-sentence  $\theta$ satisfies $T\vdash\theta\leftrightarrow\neg{\sf Pr}_T(\#\theta)$.  Then $\neg\theta$ is a true $\Sigma_1$-sentence, whence $T\vdash\neg\theta$. This, by $T\vdash\theta\leftrightarrow{\sf Con}(T)$, implies that $T\vdash\neg{\sf Con}(T)$ and so the theory $T+{\sf Con}(T)$ is not consistent.
\end{proof}

The consistency of $T+{\sf Con}(T)$ is a strictly stronger condition than the simple consistency of $T$ (see~\cite[Corollary~37]{isaac}), while $\omega$-consistent theories satisfy this condition (see~\cite[Theorem~36]{isaac}).
Thus if a $\Pi_1$-sentence says, inside a consistent theory, that it is not provable in that theory, this, in itself, is
no reason for believing that that sentence is true (cf. Remark~\ref{rem}) unless that theory is also consistent with its own consistency statement. Finally, let us note that the consistency of $T+{\sf Con}(T)$ is a necessary and sufficient condition for the independence (unprovability and unrefutability) of the G\"odel sentences from the theory (see~\cite[Theorem~35]{isaac}).

\section{Conclusion}
G\"odel's original argument was this: the sentence $G$ says that it is not provable in $T$, and $G$ is indeed not provable in $T$; thus $G$ is true. A necessary condition for the validity of this argument is the consistency of $T$, see Theorem~\ref{con}; note that here $G$ does not need to be a $\Pi_1$-sentence. If $G$ says (only) inside the theory that it is not $T$-provable (and does not say so in the real world, i.e., $T\vdash G\leftrightarrow\neg{\sf Pr}_T(\#G)$ but $\mathbb{N}\nvDash G\leftrightarrow\neg{\sf Pr}_T(\#G)$), then $G$ need not be true, see Remark~\ref{rem}, as $T$ might not be sound (and so $T$ could be lying about $G$ or what it says!). So, G\"odel's argument is invalid in this case, even if $G$ is a $\Pi_1$-sentence. But it is valid  if $G$ is a $\Pi_1$-sentence and if $T$ is $\omega$-consistent (as G\"odel wanted the theory to be); see Theorem~\ref{nec} and the explanations after its proof.  Over-generalizing the argument proved it to be invalid (in six out of the eight possible cases), whether we take ``saying'' to be  `inside the theory' or `in the real world' ($\mathbb{N}$), and ``holding'' as `being true' (in $\mathbb{N}$) or `being provable' (in an $\omega$-consistent, recursively axiomatizable extension of Peano's Arithmetic); see Theorems~\ref{th1},~\ref{th2},~\ref{th3} and~\ref{th4}. However, the argument remains valid for $\Pi_1$-sentences and $\Pi_1$-properties, see Proposition~\ref{prop}, as they were in G\"odel's case~\cite{godel}. Our  (I)---(VIII) rules   are overgeneralizing G\"odel's argument, since he would  not mean ``the unprovability of $G$'' to be provable in the theory, because  by the second incompleteness theorem (which was intended to be proved in the never-written second part of \cite{godel}) the consistency is not provable in the theory and so it cannot prove the unprovability of anything; however, in the rules (III), (IV), (VII) and (VIII) we assumed the $F(\#A)$ to be provable in the theory.

\bibliographystyle{wileynum}



\end{document}